\theoremstyle{plain}
\newtheorem{theorem}{Theorem}[section]
\newtheorem*{theorem*}{Theorem}
\newtheorem{lemma}{Lemma}[section]
\theoremstyle{definition}
\newtheorem*{definition*}{Definition}
\theoremstyle{remark}
\newtheorem*{remark*}{Remark}
\numberwithin{equation}{section}
\begin{document}
\raggedbottom 

\title[Generalization of Romanoff's theorem]{Generalization of Romanoff's theorem}

\author{Artyom Radomskii}

\begin{abstract}We generalize Romanoff's theorem. Also, we obtain a result on sums related to Euler's totient function.
\end{abstract}

 \address{HSE University, Faculty of Computer Science, Pokrovsky Boulevard
 11, Moscow, 109028 Russia}

\keywords{Euler's totient function, Romanoff's theorem.}

\email{artyom.radomskii@mail.ru}

\maketitle

\section{Introduction}

Let $\varphi$ denote Euler's totient function. We prove

\begin{theorem}\label{T1}
Let $a$, $d$, and $s$ be integers with $a>1$, $d\geq 1$, and $s\geq 1$. Let
\[
R(n)=b_{d} n^{d}+\ldots + b_0
\]be a polynomial with integer coefficients such that $b_d >0$ and $R: \mathbb{N}\to \mathbb{N}$. Then there is a constant $c=c(R, a, s)>0$, depending only on $R$, $a$, and $s$, such that
\begin{equation}\label{T1:ineq}
N \leq \sum_{n=1}^{N} \left(\frac{a^{R(n)}-1}{\varphi(a^{R(n)}-1)}\right)^{s} \leq c N
\end{equation} for any positive integer $N$.
\end{theorem}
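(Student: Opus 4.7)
The lower bound in \eqref{T1:ineq} is immediate: since $\varphi(m)\le m$, every summand is at least $1$.

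For the upper bound I would proceed by multiplicative expansion and a Romanoff-style convergence lemma. Let $h_s$ be the multiplicative function supported on squarefree integers with $h_s(p)=(p/(p-1))^s-1$; one checks $h_s(p)\le C_s/p$, whence $h_s(d)\le C_s^{\omega(d)}/d$ for squarefree $d$. The identity
\[
\left(\frac{m}{\varphi(m)}\right)^{s}=\prod_{p\mid m}\left(\frac{p}{p-1}\right)^{s}=\sum_{d\mid m}h_{s}(d),
\]
applied to $m=m_n:=a^{R(n)}-1$, gives after interchanging summations
\[
\sum_{n=1}^{N}\left(\frac{m_n}{\varphi(m_n)}\right)^{s}=\sum_{d}h_{s}(d)\,N_{d},\qquad N_{d}:=\#\{n\le N:d\mid m_{n}\},
\]
where the outer sum is over squarefree $d$ coprime to $a$.

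Now $d\mid a^{k}-1$ is equivalent to $e(d):=\operatorname{ord}_{d}(a)$ dividing $k$. Writing $\nu_{R}(k)$ for the number of residues $r\pmod k$ with $R(r)\equiv 0\pmod k$, counting $n\le N$ with $R(n)\equiv 0\pmod{e(d)}$ gives the bound $N_{d}\le \nu_{R}(e(d))(N/e(d)+1)$. Splitting into a main term and an error,
\[
\sum_{n=1}^{N}\left(\frac{m_{n}}{\varphi(m_{n})}\right)^{s}\le N\cdot M+E,
\]
with $M:=\sum_{d}h_{s}(d)\nu_{R}(e(d))/e(d)$ and $E:=\sum_{d:\,N_{d}\ge 1}h_{s}(d)\nu_{R}(e(d))$. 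Since $N_{d}\ge 1$ forces $e(d)\le R(N)$, it remains to show $M<\infty$ and $E\ll_{R,a,s}N$.

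The decisive sparsity input, as in the classical Romanoff theorem ($R(n)=n$, $s=1$), is that the primes $p$ with $e(p)\le K$ all divide $\prod_{k\le K}(a^{k}-1)$; thus $\sum_{p:\,e(p)\le K}\log p\ll_{a}K^{2}$ and therefore $\sum_{p:\,e(p)\le K}1/p\ll\log\log K$ by a Mertens-type argument. A dyadic decomposition of the squarefree $d$'s by $e(d)$, combined with this and the multiplicative estimate $\nu_{R}(k)\ll_{R}D^{\omega(k)}$ (valid because $\nu_{R}(p^{j})\le D=\deg R$ off a finite set of exceptional primes depending on $R$), should deliver the convergence of $M$. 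I expect the most delicate step to be the bound $E\ll N$: the naive inequality $\nu_{R}(k)\le k$ produces only $E\ll N^{D}\,\mathrm{polylog}(N)$, hopelessly weak for $D\ge 2$, so one has to exploit the full multiplicative structure of $\nu_{R}$ in concert with the local factors of $h_{s}$, paralleling how the classical case attains a polylog error rather than a polynomial-in-$N$ one.
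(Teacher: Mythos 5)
Your lower bound and the overall Romanoff-style strategy are right, but the upper bound as sketched has a genuine gap, and it is exactly the one you flag yourself: the error term $E=\sum_{d:\,N_d\ge 1}h_s(d)\,\nu_R(e(d))$ is not $O(N)$ by any of the tools you introduce. Restricting to $d\mid\prod_{n\le N}(a^{R(n)}-1)$ and using $h_s(d)\le C_s^{\omega(d)}/d$ gives $\sum_{d:\,N_d\ge1}C_s^{\omega(d)}/d\ll(\log N)^{O_s(1)}$, but the factor $\nu_R(e(d))$ can be as large as $e(d)^{1-1/d}\ll N^{d-1}$ (here $d=\deg R$), so for $\deg R\ge 2$ you are left with roughly $N^{\deg R-1}(\log N)^{O_s(1)}$ and no mechanism to do better. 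Your proposed fix, the multiplicative estimate $\nu_R(k)\ll_R(\deg R)^{\omega(k)}$, is false: exceptional prime powers ruin it (e.g.\ for $R(n)=n^2$ one has $\nu_R(2^{2j})=2^j$, while $\omega(2^{2j})=1$), and the moduli $e(d)$ are arbitrary integers, not squarefree, so you cannot avoid prime powers. The correct uniform substitute is Konyagin's bound $\nu_R(m)\ll \deg R\cdot b_{\deg R}^{1/\deg R}\,m^{1-1/\deg R}$ (the paper's Lemmas \ref{L3}--\ref{L_gcd}), but even with it your untruncated expansion over all squarefree $d$ does not close.

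The missing idea is truncation of the moduli before the divisor expansion. The paper invokes its Lemma \ref{L6} (Theorem 1.1 of \cite{Rad.Izv}), which bounds $\sum_{n\le N}(a_n/\varphi(a_n))^s$ by $(C(\alpha))^s\bigl(N+\sum_{p\le(\ln M)^{\alpha}}\omega(p)(\ln p)^s/p\bigr)$ --- a sum over \emph{primes only}, cut off at $(\ln M)^{\alpha}$. Choosing $\alpha=1/(2d)$ and $M=a^{c(R)N^{d}}$ makes the cutoff $\asymp N^{1/2}$, so $h_a(p)<p\ll N^{1/2}$ and the ``$+1$'' from counting residues in an interval is absorbed into $N/h_a(p)$; Konyagin then gives $\omega(p)\ll_{R,a}N/(h_a(p))^{1/d}$, and the Romanoff-type convergence $\sum_{(p,a)=1}(\ln p)^s/(p\,(h_a(p))^{1/d})<\infty$ (Lemma \ref{L5}) finishes the proof. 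So the error-term difficulty you anticipate is not a technicality to be smoothed over within your framework; it is the reason the argument must be restructured around a truncated, primes-only version of the divisor expansion.
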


\begin{theorem}\label{T2}
Let $a$ and $d$ be integers with $a>1$ and $d\geq 1$. Let
\[
R(n)=b_{d} n^{d}+\ldots + b_0
\]be a polynomial with integer coefficients such that $b_d >0$ and $R: \mathbb{N}\to \mathbb{N}$. Then there are positive constants $x_{0}=x_{0}(R,a)$, $c_{1}=c_{1}(R, a)$, and $c_{2}=c_{2}(R, a)$, depending only on $R$ and $a$, such that
\begin{align*}
c_{1}&\frac{x}{(\ln x)^{1-1/d}}\\
&\quad\leq \#\big\{1\leq n \leq x: \text{there are $p\in \mathbb{P}$ and $j\in \mathbb{N}$ such that $p + a^{R(j)}=n$}\big\}\\
 &\quad\leq c_{2}\frac{x}{(\ln x)^{1-1/d}}
\end{align*} for any real number $x\geq x_{0}$.
\end{theorem}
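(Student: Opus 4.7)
The plan is to squeeze $A(x)$ between the ``counting sum'' $S(x):=\sum_{j:\,a^{R(j)}<x}\pi(x-a^{R(j)})$ from above (trivially) and from below (via a second moment). First, the number of admissible $j$ is $J(x)\asymp(\log x)^{1/d}$, since $R(j)\le\log_a x$ forces $b_d j^d\lesssim \log x/\log a$; for the $\gg J(x)$ values of $j$ with $a^{R(j)}\le x/2$ the prime number theorem gives $\pi(x-a^{R(j)})\asymp x/\log x$, so $S(x)\asymp x/(\log x)^{1-1/d}$. The upper half of Theorem~\ref{T2} is then the immediate bound $A(x)\le S(x)$.

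For the lower bound, set $r(n)=\#\{(p,j)\in\mathbb{P}\times\mathbb{N}:p+a^{R(j)}=n\}$ and apply Cauchy--Schwarz: $S(x)^2=\bigl(\sum_{n\le x}r(n)\bigr)^2\le A(x)\sum_{n\le x}r(n)^2$, so it suffices to show $\sum_{n\le x}r(n)^2\ll x/(\log x)^{1-1/d}$. Expanding, $\sum_n r(n)^2$ counts quadruples $(p_1,j_1,p_2,j_2)$ with $p_1+a^{R(j_1)}=p_2+a^{R(j_2)}\le x$. The diagonal $j_1=j_2$ forces $p_1=p_2$ and contributes precisely $S(x)$. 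In the off-diagonal one may assume $j_1<j_2$, whence $p_1-p_2=h$ with $h=a^{R(j_2)}-a^{R(j_1)}>0$; the Selberg upper-bound sieve bounds the number of prime pairs $p_1,p_2\le x$ with fixed gap $h$ by $\ll (x/(\log x)^2)\cdot h/\varphi(h)$.

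The role of Theorem~\ref{T1} is to control $\sum_{j_1<j_2\le J(x)}h/\varphi(h)$. Factoring $h=a^{R(j_1)}(a^M-1)$ with $M=R(j_2)-R(j_1)$, the coprimality $\gcd(a^{R(j_1)},a^M-1)=1$ and the identity $a^{R(j_1)}/\varphi(a^{R(j_1)})=\prod_{p\mid a}(1-1/p)^{-1}$ reduce the task to $\sum_{j_1<j_2}(a^M-1)/\varphi(a^M-1)$. For each fixed $j_1$, writing $j_2=j_1+k$, the expression $Q_{j_1}(k):=R(j_1+k)-R(j_1)$ is a polynomial in $k$ of degree $d$ with leading coefficient $b_d$, taking $\mathbb{N}$ into $\mathbb{N}$. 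Theorem~\ref{T1} (with $s=1$) applied to $Q_{j_1}$ gives $\sum_{k\le K}(a^{Q_{j_1}(k)}-1)/\varphi(a^{Q_{j_1}(k)}-1)\le cK$, whence summing over $j_1$ bounds the off-diagonal by $\ll xJ(x)^2/(\log x)^2\ll x(\log x)^{2/d-2}$, which is dominated by $x/(\log x)^{1-1/d}$ for every $d\ge 1$. Combined with the diagonal this yields $\sum_n r(n)^2\ll S(x)$ and hence $A(x)\gg x/(\log x)^{1-1/d}$.

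The main obstacle is justifying that the constant $c$ in the application of Theorem~\ref{T1} to $Q_{j_1}$ can be taken uniform in $j_1$, since a priori $c(Q_{j_1},a,1)$ depends on all coefficients of $Q_{j_1}$. For $d=1$ this is automatic because $Q_{j_1}(k)=b_1 k$ does not in fact depend on $j_1$. For $d\ge 2$ one fallback is to bypass Theorem~\ref{T1} in the off-diagonal entirely by invoking the Mertens-type estimate $(a^M-1)/\varphi(a^M-1)\ll \log\log(a^M)\ll \log\log x$, which costs only a factor $\log\log x$ that is still absorbed into the target $x/(\log x)^{1-1/d}$ when $d\ge 2$. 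The conceptual resolution is to inspect the proof of Theorem~\ref{T1} and verify that its Romanoff-type expansion, via $m/\varphi(m)=\sum_{q\mid m}\mu^2(q)/\varphi(q)$ followed by counting $k$ with $\mathrm{ord}_q(a)\mid Q_{j_1}(k)$, produces a constant depending on $Q_{j_1}$ only through $d$, $b_d$, and the uniform local-density bound $\rho_{Q_{j_1}}(p)\le d$ supplied by Lagrange's theorem in $\mathbb{F}_p$---all invariants shared across the family $\{Q_{j_1}\}_{j_1}$.
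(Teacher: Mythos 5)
Your proposal is correct, but it takes a genuinely different route from the paper. The paper does not run the Romanoff machinery by hand: it imports it wholesale as Lemma \ref{L7} (Theorem 1.6 of the author's earlier work), which already packages the Cauchy--Schwarz/sieve argument, and the proof of Theorem \ref{T2} consists only of verifying its hypotheses. You instead rebuild that machinery from scratch (representation function $r(n)$, Cauchy--Schwarz, Selberg's sieve for prime pairs with fixed even gap $h$), which is self-contained modulo the sieve. The arithmetic heart is also handled differently. The paper never factors $h=a^{R(j_1)}(a^{M}-1)$ or invokes Theorem \ref{T1}: it bounds, for each small prime $p\nmid a$ and each $k$, the number of $n$ with $R(n)\equiv R(k)\pmod{h_a(p)}$ via Konyagin's theorem in the form of Lemma \ref{L_gcd}, whose bound $\rho(f,m)\le c\,d\,b_d^{1/d}m^{1-1/d}$ depends on $f$ only through the degree and the leading coefficient --- which is precisely how it secures the uniformity in $k$ that you correctly flag as the main obstacle (note that the relevant modulus $h_a(p)$ is composite, so Lagrange in $\mathbb{F}_p$ would not suffice there; Konyagin's bound is the right tool) --- and then sums over $p$ using the convergent series of Lemma \ref{L5}. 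Your resolution of the uniformity issue, though different, does close the gap: for $d=1$ the shifted polynomial $Q_{j_1}(k)=b_1k$ is independent of $j_1$, and for $d\ge2$ the crude bound $h/\varphi(h)\ll\log\log x$ loses only a factor that is absorbed by the slack between $(\log x)^{2/d-2}$ and $(\log x)^{1/d-1}$. One loose end to state explicitly: pairs $j_1\ne j_2$ with $R(j_1)=R(j_2)$ give $h=0$ and must be counted with the diagonal, but each value of $R$ is attained at most $d$ times, so they contribute $O(d\,S(x))$ and are harmless. What your approach buys is independence from the black-boxed Lemma \ref{L7}; what the paper's buys is brevity and a cleaner uniformity argument via the leading-coefficient-only dependence in Lemma \ref{L_gcd}.
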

Theorem \ref{T2} extends a famous result of Romanoff \cite{Romanoff} which showed the same result but with $R(n)=n$, and a result of the author \cite[Theorem 1.9]{Rad.Izv} which showed the same result but with $R(n)=n^{d}$.

\section{Notation}

We reserve the letters $p$, $q$ for primes. In particular, the sum $\sum_{p\leq K}$ should be interpreted as being over all prime numbers not exceeding $K$. By $\pi (x)$ we denote the number of primes not exceeding $x$. Let $\# A$ denote the number of elements of a finite set $A$. By $\mathbb{Z}$ and $\mathbb{N}$ we denote the sets of all integers and positive integers respectively. By $\mathbb{P}$ we denote the set of all prime numbers. Let $(a_1,\ldots, a_n)$ be the greatest common divisor of integers $a_1,\ldots, a_n$. Let $\varphi$ denote Euler's totient function, i.\,e.
\[
\varphi(n)=\#\{1\leq m \leq n: (m,n)=1\},\quad n\in \mathbb{N}.
\]

By definition, we put
\[
\sum_{\varnothing} = 0,\qquad \prod_{\varnothing}=1.
\]The symbol $b|a$ means that $b$ divides $a$. For fixed $a$ the sum $\sum_{b|a}$ and the product $\prod_{b|a}$ should be interpreted as being over all positive divisors of $a$. If $x$ is a real number, then $[x]$ denotes its integral part, $\lceil x\rceil$ is the smallest integer $n$ such that $n\geq x$, and $\{x\}:= x - [x]$ is a fractional part of $x$. We put $\log_{a}x:=\ln x/\ln a$.

For a prime $p$ and an integer $a$ with $(a,p)=1$, let $h_{a}(p)$ denote the order of $a$ modulo $p$, which is to say that $h_{a}(p)$ is the least positive integer $h$ such that $a^{h}\equiv 1$ (mod $p$). We observe that $a^{p-1}\equiv 1$ (mod $p$) (Fermat's theorem), and hence $h_{a}(p)$ exists and $1\leq h_{a}(p) \leq p-1$.

\section{Preparatory Lemmas}

\begin{lemma}\label{L1}
Let $d$ be an integer with $d\geq 1$. Let
\[
R(n)=b_{d}n^{d}+\ldots + b_{0}
\] be a polynomial with integer coefficients such that $b_{d}>0$ and $R: \mathbb{N}\to \mathbb{N}$. Then there are positive constants $c_1=c_{1}(R)$ and $c_2= c_{2}(R)$, depending only on $R$, such that
\[
c_{1} n^{d}\leq R(n)\leq c_{2}n^{d}
\]for any positive integer $n$.
\end{lemma}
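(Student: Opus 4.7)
The plan is to establish the upper and lower bounds separately, in both cases by splitting the argument at a threshold that depends only on the coefficients of $R$.

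For the upper bound, I would simply estimate each monomial crudely: since $n^i \leq n^d$ whenever $n \geq 1$ and $0 \leq i \leq d$, one has
\[
R(n) \leq \sum_{i=0}^{d} |b_i| n^i \leq \Bigl(\sum_{i=0}^{d} |b_i|\Bigr) n^d
\]
for every $n \in \mathbb{N}$, so one may take $c_2 = \sum_{i=0}^{d} |b_i|$, which depends only on $R$.

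For the lower bound, the natural idea is to use that the leading term $b_d n^d$ dominates the rest when $n$ is large. Concretely, I would write
\[
R(n) = b_d n^d + \sum_{i=0}^{d-1} b_i n^i \geq b_d n^d - \Bigl(\sum_{i=0}^{d-1} |b_i|\Bigr) n^{d-1}
\]
for $n \geq 1$, and then choose a threshold $N_0 = N_0(R)$, for example $N_0 = \lceil 2 b_d^{-1} \sum_{i=0}^{d-1} |b_i| \rceil + 1$, so that for $n \geq N_0$ the subtracted term is at most $(b_d/2) n^d$, yielding $R(n) \geq (b_d/2) n^d$. For the finitely many remaining values $1 \leq n \leq N_0 - 1$, I would use the hypothesis $R:\mathbb{N}\to\mathbb{N}$, which gives $R(n) \geq 1$, and hence $R(n)/n^d \geq 1/N_0^d$. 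Taking $c_1 = \min(b_d/2, 1/N_0^d)$ then works for all $n \geq 1$, and $c_1$ depends only on $R$.

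There is no real obstacle here: the statement is a standard polynomial comparison, and the only mild subtlety is handling small values of $n$, which is exactly where the hypothesis $R:\mathbb{N}\to\mathbb{N}$ (rather than merely $R$ with integer coefficients) is used to rule out zeros of $R$ on $\mathbb{N}$.
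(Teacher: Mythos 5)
Your proof is correct and follows essentially the same strategy as the paper: isolate the leading term to get $R(n)\geq (b_d/2)n^d$ beyond an explicit threshold $N_0(R)$, and handle the finitely many small $n$ using the hypothesis $R:\mathbb{N}\to\mathbb{N}$ (so $R(n)\geq 1$) to produce a constant depending only on $R$. The only cosmetic difference is that your upper bound $c_2=\sum_i |b_i|$ is given uniformly for all $n$ rather than via the same case split, which is slightly cleaner.
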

\begin{proof} There is a positive integer $N_{0}=N_{0} (b_d,\ldots, b_0)=N_{0}(R)$  such that
\[
\frac{b_{d}n^{d}}{2}\leq b_{d}n^{d}+\ldots + b_0 \leq 2 b_{d} n^{d}
\]for any positive integer $n\geq N_{0}$. We put
\[
M=\max_{1\leq n \leq N_{0}(R)} R(n),\qquad m= \min_{1\leq n \leq N_{0}(R)}R(n),
\]and
\[
c_2= \max (M, 2 b_d),\qquad c_1=\min \left(\frac{m}{(N_{0}(R))^{d}}, \frac{b_d}{2}\right).
\]We see that $c_1= c_{1}(R)$ and $c_2=c_{2}(R)$ are positive constants, depending only on $R$.

Let us show that
\[
c_1 n^{d}\leq R(n)\leq c_{2}n^{d}
\] for any positive integer $n$. Indeed, if $n> N_{0}(R)$, then
\begin{align*}
R(n)&\leq 2b_{d}n^{d}\leq c_{2} n^{d},\\
R(n)&\geq \frac{b_{d}n^{d}}{2}\geq c_{1} n^{d}.
\end{align*}If $1 \leq n \leq N_{0}(R)$, then
\[
R(n)\leq M \leq c_{2}\leq c_{2} n^d,
\]and
\[
R(n) \geq m = \frac{m}{(N_{0}(R))^{d}} (N_{0}(R))^{d}\geq c_{1} (N_{0}(R))^{d}\geq c_{1} n^{d}.
\]Lemma \ref{L1} is proved.

\end{proof}

\begin{lemma}\label{L8}
Let $a\in \mathbb{Z}$, $b\in \mathbb{N}$, $N\in \mathbb{N}$. Then
\begin{equation}\label{L8:ineq}
\#\{t\in \mathbb{Z}: 1\leq a+bt\leq N\}\leq \frac{N}{b}+1.
\end{equation}
\end{lemma}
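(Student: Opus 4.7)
The plan is to reduce the counting problem to counting integers in an interval of real numbers. First I would rewrite the defining inequalities $1\leq a+bt\leq N$ as the interval condition $(1-a)/b\leq t\leq (N-a)/b$, which is legitimate because $b\in\mathbb{N}$. The length of this interval is $(N-1)/b$, so the number of integers it contains is at most $(N-1)/b+1\leq N/b+1$, which is the desired bound.

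Alternatively, and perhaps cleaner to write without invoking the standard ``integers in an interval'' fact, I would proceed directly. If the set in question is empty, the bound is trivial. Otherwise, let $t_{1}<t_{2}<\ldots<t_{k}$ be its elements, set $n_{i}=a+bt_{i}$, and note that $1\leq n_{1}<n_{2}<\ldots<n_{k}\leq N$. Since $n_{i+1}-n_{i}=b(t_{i+1}-t_{i})\geq b$, we get $n_{k}-n_{1}\geq b(k-1)$. Combining with $n_{k}-n_{1}\leq N-1$ yields $k\leq (N-1)/b+1\leq N/b+1$.

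I do not anticipate any real obstacle here: the statement is a one-line estimate, and the only minor care needed is the (harmless) loss when passing from $(N-1)/b+1$ to $N/b+1$, which is what makes the bound clean and uniform in $a$. I would write the second version since it avoids any appeal to an unnamed lemma about integer points in intervals and keeps the argument self-contained.
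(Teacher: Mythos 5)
Your proof is correct and follows essentially the same idea as the paper's: both reduce the count to the number of integers in an interval of length at most $N/b$ (the paper does this via $[(N-a)/b]-[-a/b]=N/b+\{-a/b\}-\{(N-a)/b\}\leq N/b+1$, you via the interval-length or gap argument). Your second version is a perfectly valid, self-contained variant and in fact yields the marginally sharper bound $(N-1)/b+1$.
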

\begin{proof}We have
\begin{align*}
\Omega :&=\{t\in \mathbb{Z}: 1\leq a+bt\leq N\}=\{t\in \mathbb{Z}: 0< a+bt\leq N\}\\
&=\{t\in \mathbb{Z}: [-a/b]+1\leq t\leq [(N-a)/b]\}.
\end{align*} We see that if $[-a/b]+1 > [(N-a)/b]$, then $\Omega = \emptyset$, $\#\Omega = 0$, and \eqref{L8:ineq} holds.

 Suppose that $[-a/b]+1 \leq [(N-a)/b]$. Then we have
 \begin{align*}
 \#\Omega &= \left[\frac{N-a}{b}\right] - \left[\frac{-a}{b}\right] = \frac{N-a}{b} - \left\{\frac{N-a}{b}\right\}-
 \left(\frac{-a}{b}-\left\{\frac{-a}{b}\right\}\right)\\
 &=\frac{N}{b} + \left\{\frac{-a}{b}\right\} - \left\{\frac{N-a}{b}\right\}\leq \frac{N}{b} + 1.
 \end{align*}Lemma \ref{L8} is proved.
\end{proof}

\begin{lemma}\label{L2}
Let $a$ be an integer and $p$ be a prime with $(a,p)=1$. Let $d$ be a positive integer. Then $a^{d}\equiv 1$ \textup{(mod $p$)} if and only if $d\equiv 0$ \textup{(mod $h_{a}(p)$)}.
\end{lemma}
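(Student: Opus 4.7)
The plan is a standard two-direction argument using the division algorithm, with $h:=h_{a}(p)$ denoting the order of $a$ modulo $p$ as defined in the Notation section.

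For the easy direction, I would assume $d\equiv 0\pmod{h}$ and write $d=hk$ with $k$ a nonnegative integer (in fact $k\geq 1$, since $d\geq 1$ and $h\geq 1$). Then I would compute
\[
a^{d}=a^{hk}=(a^{h})^{k}\equiv 1^{k}=1\pmod{p},
\]
which uses only the definition of $h_{a}(p)$ together with the multiplicativity of congruences.

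For the nontrivial direction, I would assume $a^{d}\equiv 1\pmod{p}$ and apply the division algorithm to write $d=qh+r$ with integers $q\geq 0$ and $0\leq r<h$. Then
\[
a^{r}\equiv a^{r}\cdot(a^{h})^{q}=a^{qh+r}=a^{d}\equiv 1\pmod{p}.
\]
Since $h=h_{a}(p)$ is, by definition, the \emph{least} positive integer $h'$ such that $a^{h'}\equiv 1\pmod{p}$, and since $0\leq r<h$, the only possibility is $r=0$. Hence $d=qh$, i.e.\ $d\equiv 0\pmod{h}$.

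There is no real obstacle here: the lemma is a classical textbook fact about the multiplicative order modulo a prime, and the hypothesis $(a,p)=1$ is only needed to guarantee via Fermat's little theorem that $h_{a}(p)$ exists (a point the paper already noted in the Notation section). The only small thing to be a little careful about is that the definition of $h_{a}(p)$ used in the paper requires $h\geq 1$, so the division-algorithm remainder $r$ is constrained to $0\leq r<h$ with $h\geq 1$, which forces $r=0$ and legitimately prevents the ``trivial'' choice $r=h$.
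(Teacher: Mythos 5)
Your proof is correct and complete. The paper itself does not prove this lemma but simply cites \cite[Theorem 88]{Hardy_Wright}; your division-algorithm argument is exactly the standard proof of that textbook fact, and both directions are handled properly, including the point that the minimality of $h_{a}(p)$ forces the remainder $r$ to vanish.
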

\begin{proof} See, for example, \cite[Theorem 88]{Hardy_Wright}.
\end{proof}

\begin{lemma}\label{L3}
Let $d$ and $m$ be positive integers. Let
\[
f(x)=\sum_{i=0}^{d} b_{i} x^{i},
\]where $b_{0},\ldots, b_d$ are integers with $(b_{0},\ldots, b_d, m)=1$. Let $\rho (f,m)$ denote the number of solutions of the congruence $f(x)\equiv 0$ \textup{(mod $m$)}. Then
\[
\rho (f, m)\leq c d m^{1-1/d},
\]where $c>0$ is an absolute constant.
\end{lemma}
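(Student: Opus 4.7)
The plan is to reduce the problem to prime power moduli via the Chinese Remainder Theorem, to control $\rho(f, p^{k})$ by a Hensel-type lifting argument, and to assemble the local estimates carefully.

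First, factor $m = p_1^{k_1} \cdots p_r^{k_r}$ into prime powers. The Chinese Remainder Theorem shows that $\rho(f, \cdot)$ is multiplicative, so
\[
\rho(f, m) = \prod_{i=1}^{r} \rho(f, p_i^{k_i}).
\]
The coprimality hypothesis $(b_0, \ldots, b_d, m) = 1$ transfers to each $p_i$: at least one $b_j$ is coprime to $p_i$, so the reduction of $f$ modulo $p_i$ is not the zero polynomial and Lagrange's theorem applies.

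For a single prime power $p^{k}$, I would establish the Konyagin-type estimate
\[
\rho(f, p^{k}) \leq C d \, p^{k(1 - 1/d)}
\]
for an absolute constant $C$. The base case $k = 1$ is Lagrange, giving $\rho(f, p) \leq d$. For $k \geq 2$ I would proceed by induction: for each solution $x_0$ modulo $p^{j}$, the Taylor expansion
\[
f(x_0 + p^{j} t) \equiv f(x_0) + p^{j} t f'(x_0) + \tfrac{1}{2} p^{2j} t^{2} f''(x_0) + \cdots \pmod{p^{k}}
\]
shows that the number of lifts to the next power of $p$ is dictated by the $p$-adic valuations of $f(x_0), f'(x_0), \ldots$. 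A Newton-polygon bookkeeping, split into the regimes $k \leq d$ (where the crude bound $\rho(f, p^{k}) \leq d p^{k-1} \leq d p^{k(1-1/d)}$ already suffices) and $k > d$ (where the refined Hensel analysis is genuinely needed), yields the claimed estimate.

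The main obstacle is then the assembly step. A direct multiplication of the prime-power bounds produces $\rho(f, m) \leq (Cd)^{r} m^{1-1/d}$, which is exponential in $r = \omega(m)$ and far weaker than the stated $c d \, m^{1-1/d}$. To avoid this loss, I would replace the uniform factor $d$ at each prime by the sharper local count $r_p \leq d$ of the number of roots of $f$ modulo $p$, and then show that the product $\prod_{p \mid m} r_p$ can be absorbed into a single factor of $d$ by a combinatorial argument exploiting that $f$ has at most $d$ complex roots. Alternatively, and most likely the route taken in the paper, one cites directly the explicit theorem of Konyagin on the number of solutions of polynomial congruences modulo prime powers, from which the lemma follows by the multiplicativity above in a few lines.
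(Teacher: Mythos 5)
The paper does not prove this lemma at all: its ``proof'' is the single line ``This is [Konyagin, Theorem~2]'', and Konyagin's Theorem~2 is precisely the statement of the lemma for a general composite modulus. So your fallback option --- cite Konyagin directly --- is exactly what the paper does, although you slightly mischaracterize that theorem as a prime-power result from which the composite case ``follows by multiplicativity in a few lines''; in fact the composite case is the content of Konyagin's theorem, and citing it leaves nothing to do.

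Your primary, self-contained sketch, however, has a genuine gap at exactly the point you identify as the main obstacle. The reduction by CRT and the Hensel/Newton-polygon analysis of $\rho(f,p^{k})$ are fine in outline, but the assembly step is not repaired by the argument you propose. The claim that $\prod_{p\mid m} r_p$ ``can be absorbed into a single factor of $d$ by a combinatorial argument exploiting that $f$ has at most $d$ complex roots'' is false as stated: a polynomial with only $d$ complex roots can still have $d$ roots modulo each of arbitrarily many distinct primes (e.g.\ $x^{d}-1$ modulo every prime $p\equiv 1 \pmod d$), so $\prod_{p\mid m} r_p$ really can be as large as $d^{\omega(m)}$. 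The theorem is nevertheless true because when many primes each contribute close to $d$ roots, the modulus $m$ is forced to be large enough that $m^{1-1/d}$ compensates; making this trade-off precise and uniform in $m$ is the actual difficulty that Konyagin's paper resolves, and it is not a routine combinatorial observation. As written, your sketch proves only the weaker bound $(Cd)^{\omega(m)}m^{1-1/d}$, which (as you note) does not suffice for the lemma; to close the argument you must either supply the genuine balancing argument or, as the paper does, simply invoke Konyagin's theorem as a black box.
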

\begin{proof}
This is \cite[Theorem 2]{Konyagin}.
\end{proof}

\begin{lemma}\label{L_gcd}
Let $d$ and $m$ be positive integers. Let
\[
f(x)=\sum_{i=0}^{d} b_{i} x^{i},
\]where $b_{0},\ldots, b_d$ are integers, $b_{d}>0$. Let $\rho (f,m)$ denote the number of solutions of the congruence $f(x)\equiv 0$ \textup{(mod $m$)}, and let $\delta= (b_{0},\ldots, b_{d}, m)$. Then
\begin{equation}\label{L_gcd:INEQ_1}
\rho (f, m)\leq c d {\delta}^{1/d} m^{1-1/d},
\end{equation}where $c>0$ is an absolute constant. In particular, we have
\begin{equation}\label{L_gcd:INEQ_2}
\rho (f, m)\leq c d b_{d}^{1/d} m^{1-1/d}.
\end{equation}
\end{lemma}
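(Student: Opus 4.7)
The plan is to reduce the situation to Lemma \ref{L3} by factoring out the common divisor $\delta$ from the coefficients and the modulus, and then to count the solutions of the original congruence by lifting from the reduced modulus.

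First, I would set $\delta = (b_{0},\ldots, b_{d}, m)$ and write $m = \delta m'$ and $b_{i}= \delta c_{i}$ for $0\leq i\leq d$, where $m'$ and $c_{0},\ldots, c_{d}$ are integers. A short verification (any common factor of $c_{0},\ldots, c_{d}$ and $m'$ would, when multiplied by $\delta$, give a common factor of $b_{0},\ldots, b_{d}, m$ strictly larger than $\delta$) shows that $(c_{0},\ldots, c_{d}, m')=1$. Note also that $c_{d} = b_{d}/\delta$ is a positive integer because $b_{d}>0$ and $\delta \mid b_{d}$.

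Next, setting $g(x)=\sum_{i=0}^{d}c_{i} x^{i}$, the identity $f(x)=\delta g(x)$ yields the equivalence
\[
f(x)\equiv 0 \pmod{m}\quad \Longleftrightarrow \quad g(x)\equiv 0 \pmod{m'}.
\]
I would then count solutions in a complete residue system modulo $m$: each solution of $g(x)\equiv 0 \pmod{m'}$ taken in $\{0,1,\ldots,m'-1\}$ lifts to exactly $m/m' = \delta$ residues modulo $m$. Hence
\[
\rho(f,m) = \delta\, \rho(g,m').
\]
Applying Lemma \ref{L3} to $g$ (whose coefficients satisfy the required coprimality with $m'$) gives $\rho(g,m')\leq c d (m')^{1-1/d}$, so
\[
\rho(f,m)\leq \delta \cdot c d (m/\delta)^{1-1/d}= c d\, \delta^{1/d}\, m^{1-1/d},
\]
which is \eqref{L_gcd:INEQ_1}. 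The second bound \eqref{L_gcd:INEQ_2} then follows immediately from the observation that $\delta \mid b_{d}$ and $b_{d}>0$, so $\delta \leq b_{d}$ and $\delta^{1/d}\leq b_{d}^{1/d}$.

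I do not expect serious obstacles: the only points requiring a little care are the verification that $(c_{0},\ldots, c_{d}, m')=1$, so that Lemma \ref{L3} genuinely applies to $g$, and the counting identity $\rho(f,m)=\delta\rho(g,m')$, which must be phrased in terms of residues modulo $m$ versus residues modulo $m'$. The degenerate case $m'=1$ causes no trouble, as $\rho(g,1)=1$ and the claimed inequality still holds.
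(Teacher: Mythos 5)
Your proof is correct and follows essentially the same route as the paper: divide out $\delta$ from the coefficients and the modulus, apply Lemma \ref{L3} to the reduced polynomial modulo $m/\delta$, and lift the solutions back to residues modulo $m$. The only (cosmetic) difference is that you compute the exact lifting multiplicity $\delta$, whereas the paper bounds the number of lifts of each reduced solution by $2\delta$ via Lemma \ref{L8}, so your argument even saves a factor of $2$ in the constant.
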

\begin{proof}
Let $\delta =1$. By Lemma \ref{L3}, we have
\begin{equation}\label{L_gcd:i1}
\#\{1\leq n\leq m: f(n)\equiv 0\text{ (mod $m$)}\}\leq c d m^{1-1/d},
\end{equation}where $c>0$ is an absolute constant.

Suppose that $\delta>1$. We put
\begin{gather*}
\widetilde{m}=\frac{m}{\delta},\qquad \widetilde{f}(x)=\frac{f(x)}{\delta},\\
U=\big\{1\leq j \leq \widetilde{m}: \widetilde{f}(j)\equiv 0\text{ (mod $\widetilde{m}$)}\big\}.
\end{gather*}Since
\[
\bigg(\frac{b_{0}}{\delta},\ldots, \frac{b_{d}}{\delta}, \frac{m}{\delta}\bigg)=1,
\]by Lemma \ref{L3} we have
\begin{equation}\label{L_gcd:U}
\#U \leq c d \widetilde{m}^{1-1/d}.
\end{equation}

Let us show that
\begin{equation}\label{L_gcd:BASIC}
\#\{ 1\leq n \leq m: f(n)\equiv 0\text{ (mod $m$)}\}= \sum_{j\in U}\#\{t\in \mathbb{Z}:
1\leq j+ \widetilde{m}t\leq m\}.
\end{equation}We put
\[
\Omega = \{ 1\leq n \leq m: f(n)\equiv 0\text{ (mod $m$)}\},\quad A=\#\Omega,
\]and
\[
B=\sum_{j\in U}\#\{t\in \mathbb{Z}:
1\leq j+ \widetilde{m}t\leq m\}.
\]Let $n\in \Omega$. Then there is an integer $l$ such that $f(n)=ml$. Hence, $\widetilde{f}(n)= \widetilde{m}l$.
 Since $\widetilde{f}(n)\equiv 0$ (mod $\widetilde{m}$), there are $j\in U $ and $t\in \mathbb{Z}$ such that $n=j+ \widetilde{m}t$. Since $1\leq n \leq m$, we obtain $1\leq j+ \widetilde{m}t\leq m$. We see that $A\leq B$.

 Let $j\in U$ and let $t$ be an integer such that $1\leq j+ \widetilde{m}t\leq m$. We put $n= j+ \widetilde{m}t$. Hence, $n$ is an integer and $1\leq n \leq m$. Since $n \equiv j$ (mod $\widetilde{m}$), we have $\widetilde{f}(n)\equiv \widetilde{f}(j)$ (mod $\widetilde{m}$), and hence $\widetilde{f}(n)\equiv 0$ (mod $\widetilde{m}$). Therefore, there is an integer $l$ such that $\widetilde{f}(n)=\widetilde{m}l$. We obtain
 \[
 f(n)=\delta \widetilde{f}(n)= \delta \widetilde{m}l= ml,
 \]and hence $f(n)\equiv 0$ (mod $m$). Therefore, $n\in \Omega$. We see that $B\leq A$. Hence, $A=B$ and \eqref{L_gcd:BASIC} is proved.

 Let $j\in U$. By Lemma \ref{L8}, we have
 \[
 \#\{t\in \mathbb{Z}:
1\leq j+ \widetilde{m}t\leq m\}\leq \frac{m}{\widetilde{m}}+1= \delta+1\leq 2\delta .
 \]Applying \eqref{L_gcd:U} and \eqref{L_gcd:BASIC}, we obtain
 \begin{align}
 \#\{ 1\leq n \leq m: f(n)\equiv 0\text{ (mod $m$)}\} \leq 2\delta \#U&\leq 2 \delta  c d \left(\frac{m}{\delta}\right)^{1-1/d}\notag\\
 &= 2 c d \delta^{1/d} m^{1-1/d}.\label{L_gcd:i2}
 \end{align}

 From \eqref{L_gcd:i1} and \eqref{L_gcd:i2} we obtain
 \[
 \#\{ 1\leq n \leq m: f(n)\equiv 0\text{ (mod $m$)}\}\leq
  c_{1} d \delta^{1/d} m^{1-1/d},
 \]where $c_{1}=2c$ is a positive absolute constant. Therefore, \eqref{L_gcd:INEQ_1} is proved. Since $\delta>0$, $b_{d}>0$, and $\delta| b_{d}$, we have $\delta \leq b_{d}$, and \eqref{L_gcd:INEQ_2} follows from \eqref{L_gcd:INEQ_1}. Lemma \ref{L_gcd} is proved.
\end{proof}

\begin{lemma}\label{L4}
Let $n$ and $s$ be integers with $n\geq 3$ and $s\geq 1$. Then
\[
\sum_{p|n} \frac{(\ln p)^{s}}{p} \leq c s^{s} (\ln\ln n)^{s},
\]where $c>0$ is an absolute constant.
\end{lemma}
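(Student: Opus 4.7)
The plan is to split the sum at the cutoff $y:=(\ln n)^{s}$. Set $L:=\ln n$ and decompose
\[
\sum_{p\mid n}\frac{(\ln p)^{s}}{p}=S_{1}+S_{2},\qquad S_{1}:=\sum_{\substack{p\mid n\\ p\le y}}\frac{(\ln p)^{s}}{p},\quad S_{2}:=\sum_{\substack{p\mid n\\ p>y}}\frac{(\ln p)^{s}}{p}.
\]

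For $S_{1}$ I would drop the divisibility condition and pull out the uniform bound $(\ln p)^{s-1}\le(\ln y)^{s-1}$; Mertens' estimate $\sum_{p\le y}\ln p/p\le C\ln y$ then yields
\[
S_{1}\le(\ln y)^{s-1}\sum_{p\le y}\frac{\ln p}{p}\le c_{0}(\ln y)^{s}=c_{0}s^{s}(\ln\ln n)^{s},
\]
which is already the target bound. This is precisely why the cutoff must be $y=L^{s}$: since $\ln y=s\ln\ln n$, any smaller $y$ would fail to produce the required $s^{s}$ factor.

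For $S_{2}$ I would enumerate the prime divisors of $n$ exceeding $y$ as $p_{1}<\cdots<p_{k}$. Since $\prod_{i}p_{i}\mid n$, one has $\sum_{i}\ln p_{i}\le L$ and, a fortiori, $k\le L/\ln y$. Once $n$ is above some absolute threshold (so that $y=L^{s}\ge e$), the function $x\mapsto(\ln x)/x$ is decreasing on $[y,\infty)$, whence $(\ln p_{i})/p_{i}\le(\ln y)/y$ for every $i$. Combining this with the trivial bound $(\ln p_{i})^{s-1}\le L^{s-1}$ yields
\[
S_{2}=\sum_{i=1}^{k}(\ln p_{i})^{s-1}\cdot\frac{\ln p_{i}}{p_{i}}\le k\cdot L^{s-1}\cdot\frac{\ln y}{y}\le\frac{L^{s}}{y}=1.
\]

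Adding the two bounds gives $\sum_{p\mid n}(\ln p)^{s}/p\le c_{0}s^{s}(\ln\ln n)^{s}+1$. For $n$ large enough that $\ln\ln n\ge 1$, the additive $1$ is absorbed into the main term, giving the claim with $c=c_{0}+1$. The finitely many remaining values $3\le n<N_{0}$ can be handled using the pointwise estimate $(\ln p)^{s}/p\le(s/e)^{s}$ (a consequence of $\ln x\le sx^{1/s}/e$), which yields $\sum_{p\mid n}(\ln p)^{s}/p\le\omega(n)(s/e)^{s}$; since $\ln\ln n$ is bounded below by a positive constant on this finite range, a further enlargement of $c$ disposes of these cases. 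The main obstacle is the balance in choosing $y$: too small a $y$ forfeits the $s^{s}$ factor in $S_{1}$, whereas too large a $y$ makes each summand in $S_{2}$ potentially of size $(s/e)^{s}$ and blows the bound up; the choice $y=L^{s}$ realises the balance exactly, producing $(\ln y)^{s}=s^{s}(\ln\ln n)^{s}$ in $S_{1}$ and $L^{s}/y=1$ in $S_{2}$ simultaneously.
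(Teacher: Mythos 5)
Your decomposition at $y=(\ln n)^{s}$ and the two resulting bounds $S_{1}\le c_{0}s^{s}(\ln\ln n)^{s}$ and $S_{2}\le 1$ are exactly the paper's argument: the paper cites a lemma asserting $\sum_{p\le y}(\ln p)^{s}/p\le c(\ln y)^{s}$ where you rederive it from Mertens, and it bounds $S_{2}$ by the cruder but threshold-free chain $S_{2}\le y^{-1}\sum_{p|n}(\ln p)^{s}\le y^{-1}\bigl(\sum_{p|n}\ln p\bigr)^{s}\le y^{-1}(\ln n)^{s}=1$, valid for every $n\ge 3$, whereas your count of the large prime divisors needs $y\ge e$. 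Up to that point your proof is correct and essentially identical in structure.

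The genuine gap is in your treatment of the finitely many remaining $n$. You bound the sum by $\omega(n)(s/e)^{s}$ and assert that, since $\ln\ln n$ is bounded below on the finite range, a further enlargement of the absolute constant $c$ finishes the proof. That step requires $c\ge \omega(n)\,(e\ln\ln n)^{-s}$, and for $n=3$ one has $e\ln\ln 3\approx 0.26<1$, so $(e\ln\ln 3)^{-s}\approx 3.9^{s}\to\infty$ as $s\to\infty$: no absolute constant closes this comparison. The statement is still true for these $n$, but you need a different estimate: for $3\le n<N_{0}$ every prime divisor satisfies $\ln p\le \ln N_{0}$, so the sum is at most $\omega(n)(\ln N_{0})^{s}$, while the target is at least $c\,(s\ln\ln 3)^{s}$, and $\sup_{s\ge1}\bigl(C/s\bigr)^{s}=e^{C/e}<\infty$ with $C=\ln N_{0}/\ln\ln 3$ gives an admissible absolute constant. (The paper avoids the issue entirely: its $S_{2}\le 1$ holds for all $n\ge 3$, and it absorbs the $1$ by writing $1\le(11\ln\ln n)^{s}\le c_{0}s^{s}(\ln\ln n)^{s}$ with $c_{0}=\sup_{s}(11/s)^{s}$, using $11\ln\ln 3>1$.) Either repair makes your proof complete.
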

\begin{proof}
We have
\[
S=\sum_{p|n} \frac{(\ln p)^{s}}{p} = \sum_{\substack{p|n\\ p\leq (\ln n)^{s}}} \frac{(\ln p)^{s}}{p} +
\sum_{\substack{p|n\\ p> (\ln n)^{s}}} \frac{(\ln p)^{s}}{p}= S_{1}+ S_{2}.
\]First we estimate $S_{2}$. We have
\[
S_{2}\leq \frac{1}{(\ln n)^{s}}\sum_{p|n}(\ln p)^{s}.
\]Let
\[
n=p_{1}^{\alpha_{1}}\cdots p_{r}^{\alpha_{r}}
\]be the unique factorization of $n$ into powers of distinct primes. Then
\begin{align*}
\sum_{p|n}(\ln p)^{s}&= (\ln p_{1})^{s}+\ldots + (\ln p_{r})^{s}\leq (\ln p_1+\ldots+ \ln p_{r})^{s}\\
 &=(\ln (p_1\cdots p_r))^{s}\leq (\ln n)^{s}.
\end{align*} We obtain $S_{2}\leq 1$. Since $n\geq 3$, we have $1\leq 11 \ln\ln n$. Hence,
\[
S_{2}\leq 1\leq (11\ln\ln n)^{s}\leq c_{0} s^{s}(\ln\ln n)^{s},
\]where $c_{0}>0$ is an absolute constant.

Now we estimate $S_{1}$. It can be shown (see, for example, \cite[Lemma 3.5]{Rad.Izv}) that there is an absolute constant $c_{1}>0$ such that
\[
\sum_{p\leq x} \frac{(\ln p)^{s}}{p}\leq c_{1}(\ln x)^{s}
\]for any real number $x\geq 1$. We obtain
\[
S_{1}\leq  \sum_{p\leq (\ln n)^{s}} \frac{(\ln p)^{s}}{p}\leq c_{1} s^{s} (\ln\ln n)^{s}.
\]We see that $S\leq c s^{s}(\ln\ln n)^{s}$, where $c=c_{0}+c_{1}$ is a positive absolute constant. Lemma \ref{L4} is proved.
\end{proof}

\begin{lemma}\label{L5}
Let $a$, $d$, and $s$ be integers such that $a>1$, $d\geq 1$, and $s\geq 1$. Then
\[
\sum_{\substack{p:\\ (p,a)=1}} \frac{(\ln p)^{s}}{p (h_{a}(p))^{1/d}}\leq c(a,d,s),
\]where $c(a,d,s)>0$ is a constant, depending only on $a$, $d$, and $s$.
\end{lemma}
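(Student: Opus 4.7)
The plan is to split the sum $S$ appearing on the left-hand side of the conclusion according to the size of $h_{a}(p)$ relative to $(\ln p)^{A}$, where $A=d(s+2)$. Write $S=S_{1}+S_{2}$, with $S_{1}$ collecting the primes $p$ coprime to $a$ satisfying $h_{a}(p)>(\ln p)^{A}$ and $S_{2}$ collecting the remaining primes. The threshold $A=d(s+2)$ is calibrated so that on $S_{1}$ one has $h_{a}(p)^{1/d}>(\ln p)^{s+2}$, whence each summand is at most $1/(p(\ln p)^{2})$. Hence $S_{1}\leq \sum_{p}1/(p(\ln p)^{2})$, which is bounded by an absolute constant by the integral test.

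For $S_{2}$ I would interchange the order of summation, grouping the primes by the common value $h=h_{a}(p)$; the defining restriction $h\leq(\ln p)^{A}$ becomes $p\geq \exp(h^{1/A})$, forcing the primes in each $h$-group to be quite large. Each such prime divides $a^{h}-1$, so the $\log$-product of these primes is at most $\ln(a^{h}-1)\leq h\ln a$. Combining this with the pointwise lower bound $\ln p\geq h^{1/A}$ bounds the count of primes in the $h$-th group by $h^{1-1/A}\ln a$, while the pointwise estimates $\ln p\leq h\ln a$ and $1/p\leq \exp(-h^{1/A})$ yield an inner-sum bound of the form $h^{s+1-1/A}(\ln a)^{s+1}\exp(-h^{1/A})$. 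Therefore
\[
S_{2}\leq (\ln a)^{s+1}\sum_{h\geq 1}h^{s+1-1/A-1/d}\exp(-h^{1/A})<\infty,
\]
the series converging because the exponential factor dominates any polynomial in $h$.

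The principal obstacle is that more direct interchanges of summation fail. A naive application of Lemma \ref{L4} to $\sum_{p|a^{h}-1}(\ln p)^{s}/p$ produces a bound of size $(\ln h)^{s}$, leading to the divergent tail $\sum_{h}(\ln h)^{s}/h^{1/d}$; similarly, using only the trivial lower bound $p\geq h+1$ from $h\mid p-1$ gives $T_{h}\ll h^{s-1}(\ln a)^{s}$, which is still not summable against $h^{-1/d}$. The remedy is precisely to exploit the additional constraint $p\geq \exp(h^{1/A})$ inside $S_{2}$: its super-polynomially small reciprocal defeats the polynomial count of the relevant prime divisors of $a^{h}-1$, while the complementary piece $S_{1}$ is handled by the calibrated choice $A=d(s+2)$.
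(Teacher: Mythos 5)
Your proof is correct, and it takes a genuinely different route from the paper's. The paper groups the primes by the exact value $n=h_{a}(p)$, sets $\gamma_{n}=\sum_{h_{a}(p)=n}(\ln p)^{s}/p$, bounds the partial sums $G(z)=\sum_{n\le z}\gamma_{n}$ by applying Lemma \ref{L4} to $P(z)=\prod_{n\le z}(a^{n}-1)$ (obtaining $G(z)\le c(a,s)(\ln z)^{s}$ since $\ln\ln P(z)\ll \ln z$), and then introduces the weight $n^{-1/d}$ by Abel summation. You instead split according to whether $h_{a}(p)$ exceeds $(\ln p)^{A}$ with $A=d(s+2)$: the large-order primes are disposed of trivially via $\sum_{p}1/(p(\ln p)^{2})<\infty$, and for the small-order primes you exploit that the defining constraint forces $p\ge \exp(h^{1/A})$, so the crude count $\#\{p:\ h_{a}(p)=h,\ \ln p\ge h^{1/A}\}\le h^{1-1/A}\ln a$ (from $\prod_{p\mid a^{h}-1}p\le a^{h}$) is overwhelmed by the super-polynomially small factor $e^{-h^{1/A}}$, making the sum over $h$ converge. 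All the individual estimates check out, and the resulting constant indeed depends only on $a$, $d$, $s$. Your closing diagnosis of why the naive interchange diverges is exactly right, and it explains why the paper applies Lemma \ref{L4} to the product $P(z)$ followed by partial summation rather than to each $a^{h}-1$ separately. Your argument is somewhat more elementary --- it needs neither Lemma \ref{L4} nor Abel summation, only the trivial bound on the number of large prime divisors of $a^{h}-1$ --- at the cost of calibrating the exponent $A$; the paper's route yields the cleaner intermediate estimate $G(z)\ll (\ln z)^{s}$, which is reusable elsewhere.
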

\begin{proof} We put
\[
\gamma_{n}= \sum_{\substack{p:\\ (p,a)=1\\ h_{a}(p)=n}} \frac{(\ln p)^{s}}{p},\quad n=1, 2, \ldots,
\]and
\[
G(z)=\sum_{n \leq z} \gamma_{n},\qquad P(z)=\prod_{n\leq z} (a^{n}-1).
\] Let $z\geq 10$. It is easy to see that $P(z)\geq 3$ and
\[
P(z)\leq \prod_{n\leq z} a^{n}= a^{1+2+\ldots+ [z]}\leq a^{z^{2}}.
\]Applying Lemma \ref{L4}, we obtain
\[
G(z)=\sum_{n\leq z} \sum_{\substack{p:\\ (p,a)=1\\ h_{a}(p)=n}} \frac{(\ln p)^{s}}{p}\leq
 \sum_{p| P(z)} \frac{(\ln p)^{s}}{p}\leq c s^{s} (\ln\ln P(z))^{s}\leq c_{1}(a,s) (\ln z)^{s},
\]where $c_{1}(a,s)>0$ is a constant, depending only on $a$ and $s$. If $1 \leq z < 10$, then we have
\[
G(z)\leq G(10)\leq c_{1}(a,s) (\ln 10)^{s}\leq c_{1}(a,s) (\ln 10)^{s} (\ln (z+2))^{s} = c_{2}(a,s) (\ln (z+2))^{s}
\](we used here that $1\leq \ln (z+2)$ for any $z\geq 1$). Hence, there is a constant $c(a,s)>0$, depending only on $a$ and $s$, such that
\[
0\leq G(z) \leq c(a,s)(\ln (z+2))^{s}
\] for any real number $z\geq 1$.

Applying partial summation (see, for example, \cite[Theorem 2.1.1]{Murty}), for any $z\geq 1$, we have
\[
\sum_{n\leq z} \frac{\gamma_{n}}{n^{1/d}} = \frac{G(z)}{z^{1/d}}+ \frac{1}{d}\int_{1}^{z}\frac{G(t)}{t^{1+1/d}}\,dt.
\] Since $G(z) z^{-1/d}\to 0$ as $z\to +\infty$, we obtain
\[
\sum_{n=1}^{\infty} \frac{\gamma_{n}}{n^{1/d}}=\frac{1}{d}\int_{1}^{\infty}\frac{G(t)}{t^{1+1/d}}\,dt\leq
\frac{c(a,s)}{d}\int_{1}^{\infty}\frac{(\ln (t+2))^{s}}{t^{1+1/d}}\,dt= c(a,d,s),
\]where $c(a,d,s)>0$ is a constant, depending only on $a$, $d$, and $s$. In particular, we see that the series $\sum_{n\geq 1} \gamma_{n} n^{-1/d}$ is convergent.

We have
\begin{align*}
\sum_{n=1}^{\infty} \frac{\gamma_{n}}{n^{1/d}}&= \sum_{n\geq 1} \sum_{\substack{p:\\ (p,a)=1\\ h_{a}(p)=n}} \frac{(\ln p)^{s}}{p(h_{a}(p))^{1/d}}= \sum_{\substack{p:\\ (p,a)=1}} \sum_{\substack{n\geq 1:\\ h_{a}(p)=n}}\frac{(\ln p)^{s}}{p(h_{a}(p))^{1/d}}\\
&= \sum_{\substack{p:\\ (p,a)=1}}\frac{(\ln p)^{s}}{p(h_{a}(p))^{1/d}} \sum_{\substack{n\geq 1:\\ h_{a}(p)=n}}1
= \sum_{\substack{p:\\ (p,a)=1}}\frac{(\ln p)^{s}}{p(h_{a}(p))^{1/d}}.
\end{align*}This gives the result.
\end{proof}

\begin{lemma}\label{L6}
Let $\alpha$ be a real number with $0<\alpha <1$. Then there is a constant $C(\alpha)>0$, depending only on $\alpha$, such that the following holds. Let $M$ be a real number with $M\geq 1$, let $a_1,\ldots, a_{N}$ be positive integers \textup{(}not necessarily distinct\textup{)} with $a_{n}\leq M$ for all $1\leq n \leq N$. We define
\[
\omega(d)=\#\{1\leq n\leq N: a_{n}\equiv 0\ \textup{\text{(mod $d$)}}\}
\]for any positive integer $d$. Let $s$ be a positive integer. Then
\[
\sum_{n=1}^{N}\left(\frac{a_{n}}{\varphi(a_{n})}\right)^{s}\leq (C(\alpha))^{s}
\biggl(N+\sum_{p\leq (\ln M)^{\alpha}} \frac{\omega(p)(\ln p)^{s}}{p}\biggr).
\]
\end{lemma}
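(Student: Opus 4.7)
The plan is to set $T=(\ln M)^{\alpha}$, the cut-off in the statement, and for each $n$ split
\[
\Bigl(\frac{a_{n}}{\varphi(a_{n})}\Bigr)^{\!s} = \prod_{p|a_{n},\,p\leq T}\Bigl(\frac{p}{p-1}\Bigr)^{\!s}\cdot\prod_{p|a_{n},\,p>T}\Bigl(\frac{p}{p-1}\Bigr)^{\!s}.
\]
I will bound the large-prime product uniformly in $n$ by a quantity of the form $D(\alpha)^{s}$, and control the sum over $n$ of the small-prime product by a multiplicative expansion organized by largest prime factor. If $T<2$, i.e.\ $M$ is bounded in terms of $\alpha$ alone, then each $a_{n}$ is bounded by a constant depending only on $\alpha$, so $(a_{n}/\varphi(a_{n}))^{s}$ is dominated by a constant to the power $s$ and the claim follows by enlarging $C(\alpha)$.

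For the large-prime part, $1+x\leq e^{x}$ and $1/(p-1)\leq 2/p$ give
\[
\prod_{p|a_{n},\,p>T}\Bigl(\frac{p}{p-1}\Bigr)^{\!s}\leq \exp\Bigl(2s\sum_{p|a_{n},\,p>T}\frac{1}{p}\Bigr).
\]
Since $\ln p>\ln T=\alpha\ln\ln M$ in the range of summation, the inner sum is at most $(1/\ln T)\sum_{p|a_{n}}(\ln p)/p$. Lemma \ref{L4} with exponent $1$ bounds $\sum_{p|a_{n}}(\ln p)/p$ by $c\ln\ln a_{n}\leq c\ln\ln M$ whenever $a_{n}\geq 3$ (for $a_{n}\in\{1,2\}$ and $T\geq 2$ the sum is empty). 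Thus the inner sum is $\leq c/\alpha$ and the large-prime product is $\leq D(\alpha)^{s}$ with $D(\alpha):=e^{2c/\alpha}$.

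Turning to $P_{n}:=\prod_{p|a_{n},\,p\leq T}(p/(p-1))^{s}$, I would apply the multiplicative identity
\[
P_{n} = \sum_{\substack{d|a_{n},\,d\text{ squarefree}\\ p|d\Rightarrow p\leq T}}\prod_{p|d}g(p),\qquad g(p):=\Bigl(\frac{p}{p-1}\Bigr)^{\!s}-1,
\]
and swap orders of summation to get $\sum_{n}P_{n}=N+\sum_{d>1}\omega(d)\prod_{p|d}g(p)$. For each $d>1$, write $d=qd'$ with $q$ the largest prime factor of $d$ and $d'$ squarefree with prime factors $<q$. Using $\omega(d)\leq\omega(q)$, the $d>1$ sum is at most
\[
\sum_{q\leq T}\omega(q)\,g(q)\prod_{p<q}\Bigl(\frac{p}{p-1}\Bigr)^{\!s}.
\]
Mertens' theorem yields $\prod_{p<q}(p/(p-1))\leq C_{1}\ln(q+2)$ for all $q\geq 2$ and an absolute $C_{1}$, while the binomial inequality $(1+x)^{s}-1\leq x(2^{s}-1)$ for $0\leq x\leq 1$ gives $g(q)\leq 2^{s+1}/q$. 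Combining these yields $\sum_{n}P_{n}\leq C_{0}^{s}\bigl(N+\sum_{q\leq T}\omega(q)(\ln q)^{s}/q\bigr)$ for an absolute $C_{0}$, and multiplying by $D(\alpha)^{s}$ gives the lemma with $C(\alpha)=C_{0}\cdot D(\alpha)$.

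The main obstacle is obtaining the uniform bound $D(\alpha)^{s}$ on the large-prime product. A naive estimate using only $\#\{p|a_{n}:p>T\}\leq \ln M/\ln T$ together with $1/p<1/T$ puts $(\ln M)^{1-\alpha}/(\alpha\ln\ln M)$ into the exponent, which diverges with $M$ whenever $\alpha<1$. Lemma \ref{L4} is the correct tool: the weighted sum $\sum_{p|a_{n}}(\ln p)/p$ is itself controlled by $\ln\ln a_{n}$, and dividing by $\ln T=\alpha\ln\ln M$ extracts precisely the constant $c/\alpha$ needed.
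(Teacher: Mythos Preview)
The paper does not actually prove this lemma: it is quoted verbatim as \cite[Theorem~1.1]{Rad.Izv} with no argument given. So there is no in-paper proof to compare your approach against.

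That said, your argument is correct and self-contained. The split at $T=(\ln M)^{\alpha}$ is the natural one, and your use of Lemma~\ref{L4} to control the large-prime factor is exactly the right move: writing $\sum_{p\mid a_n,\,p>T}1/p\le(\ln T)^{-1}\sum_{p\mid a_n}(\ln p)/p$ and invoking $\sum_{p\mid a_n}(\ln p)/p\ll\ln\ln a_n\le\ln\ln M$ produces the uniform bound $D(\alpha)^{s}$ with $D(\alpha)=e^{2c/\alpha}$. The small-prime part via the expansion $\prod_{p\le T,\,p\mid a_n}(1+g(p))$, the swap of summation, and the largest-prime-factor grouping with $\omega(d)\le\omega(q)$ all go through cleanly; the convexity inequality $(1+x)^{s}-1\le x(2^{s}-1)$ on $[0,1]$ and Mertens' estimate $\prod_{p<q}p/(p-1)\ll\ln q$ supply the factors $2^{s+1}/q$ and $(C_{1}\ln q)^{s}$ you need, after replacing $\ln(q+2)$ by $2\ln q$ at the cost of another $2^{s}$. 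The boundary cases ($T<2$, and $a_n\in\{1,2\}$ when $T\ge2$) are handled correctly.
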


\begin{proof} This is \cite[Theorem 1.1]{Rad.Izv}.
\end{proof}

\begin{lemma}\label{L7}
Let $A=\{a_{n}\}_{n=1}^{\infty}$ be a sequence of positive integers \textup{(}not necessarily distinct\textup{)}. We put
\begin{align*}
N_{A}(x)&=\#\{n\in \mathbb{N}: a_n \leq x\},\\
\textup{ord}_{A}(n)&=\#\{j\in \mathbb{N}: a_j=n\},\qquad  n\in\mathbb{N},\\
\rho_{A}(x)&=\max_{n\leq x}\textup{ord}_{A}(n).
\end{align*}Suppose that $\textup{ord}_{A}(n)<+\infty$ for any positive integer $n$. Suppose that there are constants $\gamma_{1}>0$, $\gamma_{2}>0$, $\alpha>0$, $x_{0}\geq 10$ such that the following holds.  For any real number $x\geq x_0$, we have
\begin{gather}
N_{A}(x)>0,\label{L:R2_i1}\\
N_{A}\biggl(\frac{x}{2}\biggr)\geq \gamma_1 N_{A}(x),\label{L:R2_i2}\\
\sum_{\substack{k\in \mathbb{N}:\\ a_k < x}}\sum_{p\leq (\ln x)^{\alpha}}
\frac{\#\{n\in\mathbb{N}: a_{k}< a_n \leq x\ \text{and } a_n\equiv a_k\ \textup{(mod $p$)}\}\ln p}{p}\leq \gamma_2 (N_{A}(x))^2.\label{L:R2_i3}
\end{gather}Then there is a positive constant $c=c(\gamma_1, \gamma_2, \alpha)$, depending only on $\gamma_1$, $\gamma_2$, $\alpha$, such that
\begin{align*}
\#\{1&\leq n \leq x: \text{there are $p\in \mathbb{P}$ and $j\in \mathbb{N}$ such that $p+a_j=n$}\}\\
&\geq c x\frac{N_{A}(x)}{N_{A}(x)+\rho_{A}(x)\ln x}
\end{align*}for any real number $x\geq x_0$.
\end{lemma}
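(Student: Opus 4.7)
The plan is to apply the Cauchy--Schwarz inequality to a representation counting function. For each positive integer $n$, I set
\[
r(n)=\#\bigl\{(p,j)\in \mathbb{P}\times \mathbb{N}: p\leq x/2,\ a_{j}\leq x/2,\ p+a_{j}=n\bigr\}.
\]
Since $r(n)>0$ forces $n\leq x$ and $n$ to be representable as $p+a_{j}$, it suffices to bound $\#\{n\in\mathbb{N}: r(n)>0\}$ from below, and Cauchy--Schwarz gives
\[
\#\{n\in\mathbb{N}: r(n)>0\}\geq \frac{\bigl(\sum_{n}r(n)\bigr)^{2}}{\sum_{n}r(n)^{2}}.
\]
The first moment is $\pi(x/2)N_{A}(x/2)$, and by the prime number theorem together with \eqref{L:R2_i2} this is $\gg xN_{A}(x)/\ln x$. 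The entire task therefore reduces to a matching upper bound on the second moment.

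Expanding, $\sum_{n}r(n)^{2}$ counts ordered quadruples $(p_{1},p_{2},j_{1},j_{2})$ with $p_{1}+a_{j_{1}}=p_{2}+a_{j_{2}}$. I would split the sum according to whether $a_{j_{1}}=a_{j_{2}}$. In the \emph{diagonal} case $p_{1}=p_{2}$ is forced and the count of ordered pairs is $\sum_{v\leq x/2}(\textup{ord}_{A}(v))^{2}\leq \rho_{A}(x)N_{A}(x)$, giving a contribution $\ll x\rho_{A}(x)N_{A}(x)/\ln x$. In the \emph{off-diagonal} case $k:=a_{j_{2}}-a_{j_{1}}\neq 0$ and the standard Brun/Selberg sieve bound
\[
\#\bigl\{(p_{1},p_{2})\in \mathbb{P}^{2}: p_{1},p_{2}\leq x,\ p_{1}-p_{2}=k\bigr\}\ll \frac{x}{(\ln x)^{2}}\cdot \frac{|k|}{\varphi(|k|)}
\]
applies, so the whole second moment is controlled once one proves the crucial estimate
\[
S:=\sum_{\substack{(j_{1},j_{2}):\ a_{j_{1}}\neq a_{j_{2}}\\ a_{j_{1}},a_{j_{2}}\leq x/2}}\frac{|a_{j_{1}}-a_{j_{2}}|}{\varphi(|a_{j_{1}}-a_{j_{2}}|)}\ll (N_{A}(x))^{2}.
\]

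The main obstacle will be deriving this bound on $S$ from hypothesis \eqref{L:R2_i3}. Setting $y=(\ln x)^{\alpha}$, I would factor
\[
\frac{|k|}{\varphi(|k|)}=\prod_{p\mid k,\,p\leq y}\!\!\bigl(1-\tfrac{1}{p}\bigr)^{-1}\;\cdot\;\prod_{p\mid k,\,p>y}\!\!\bigl(1-\tfrac{1}{p}\bigr)^{-1}
\]
and control the tail product by an absolute constant, using $\omega_{>y}(k)\leq \ln k/\ln y$ together with $k\leq x$. The small-prime product should then be linearised by an inequality of the shape $\prod_{p\mid k,\,p\leq y}(1-1/p)^{-1}\leq C_{1}+C_{2}\sum_{p\mid k,\,p\leq y}(\ln p)/p$; the estimate $S\ll N_{A}(x)^{2}$ then follows by summing over pairs $(j_{1},j_{2})$ and invoking \eqref{L:R2_i3} directly (the $\ln p$ weight appearing in \eqref{L:R2_i3} being precisely the weight that arises naturally here). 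Combining the diagonal and off-diagonal bounds produces
\[
\sum_{n}r(n)^{2}\ll \frac{x\rho_{A}(x)N_{A}(x)}{\ln x}+\frac{x(N_{A}(x))^{2}}{(\ln x)^{2}},
\]
and inserting this into Cauchy--Schwarz gives, after algebraic simplification,
\[
\frac{\bigl(\sum_{n}r(n)\bigr)^{2}}{\sum_{n}r(n)^{2}}\gg \frac{xN_{A}(x)}{N_{A}(x)+\rho_{A}(x)\ln x},
\]
which is the desired lower bound.
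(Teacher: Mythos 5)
The paper does not actually prove Lemma \ref{L7}: it is quoted verbatim from \cite[Theorem 1.6]{Rad.Izv}, so there is no in-paper argument to compare against. Your skeleton is the standard Romanoff--Erd\H{o}s second-moment method (Cauchy--Schwarz on $r(n)$, diagonal/off-diagonal split, Brun--Selberg bound for $p_1-p_2=k$, linearization of $|k|/\varphi(|k|)$ against the hypothesis \eqref{L:R2_i3}), and the bookkeeping at the end --- first moment $\gg xN_A(x)/\ln x$, diagonal $\ll x\rho_A(x)N_A(x)/\ln x$, off-diagonal $\ll x(N_A(x))^2/(\ln x)^2$ --- does combine to the claimed lower bound. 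The reduction of $S\ll (N_A(x))^2$ to \eqref{L:R2_i3} via the weight $\sum_{p\mid k,\,p\le y}(\ln p)/p$ is also the right move, since $p\mid a_{j_2}-a_{j_1}$ is exactly the congruence $a_{j_2}\equiv a_{j_1}$ (mod $p$) appearing there.

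The genuine gap is your justification of the bound on the tail product $\prod_{p\mid k,\,p>y}(1-1/p)^{-1}$ with $y=(\ln x)^{\alpha}$. Using only $\omega_{>y}(k)\le \ln k/\ln y$ and $(1-1/y)^{-1}$ per factor gives
$\exp\bigl(O(\ln x/(y\ln y))\bigr)=\exp\bigl(O((\ln x)^{1-\alpha}/\ln\ln x)\bigr)$,
which is \emph{unbounded} whenever $\alpha<1$ --- and $\alpha<1$ is precisely the case needed for Theorem \ref{T2}, where $\alpha=1/(2d)$. An unbounded factor here would destroy the estimate $S\ll (N_A(x))^2$. The claim itself is nevertheless true: split the tail at $\ln x$ and use Mertens' second theorem, namely
\[
\sum_{\substack{p\mid k\\ y<p\le \ln x}}\frac1p\;\le\;\sum_{y<p\le \ln x}\frac1p\;=\;\ln\frac{\ln\ln x}{\alpha\ln\ln x}+o(1)\;=\;\ln\frac1\alpha+O(1),
\qquad
\sum_{\substack{p\mid k\\ p>\ln x}}\frac1p\;\le\;\frac{\omega(k)}{\ln x}\;=\;O(1),
\]
so the tail product is $O_{\alpha}(1)$, which is admissible since the final constant may depend on $\alpha$. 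You should also supply a proof of the linearization $\prod_{p\mid k,\,p\le y}(1-1/p)^{-1}\le C_1+C_2\sum_{p\mid k,\,p\le y}(\ln p)/p$, which you only assert "should hold"; naive exponentiation of $\sum_{p\mid k}2/p$ is too lossy in the extremal (primorial) case, and one again needs a Mertens-based argument with an optimally chosen cutoff. With these two repairs the proof is complete.
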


\begin{proof}
This is \cite[Theorem 1.6]{Rad.Izv}.
\end{proof}

\section{Proof of theorems \ref{T1} and \ref{T2}}

\begin{proof}[Proof of Theorem \ref{T1}.] We observe that $1\leq \varphi (m)\leq m$ for any positive integer $m$. It is clear that $a^{R(n)}-1$ is a positive integer for any $n\in\mathbb{N}$. Hence,
\[
\left(\frac{a^{R(n)}-1}{\varphi(a^{R(n)}-1)}\right)^{s} \geq 1
\]for any $n\in \mathbb{N}$. Fix a positive integer $N$ and put
\[
S= \sum_{n=1}^{N} \left(\frac{a^{R(n)}-1}{\varphi(a^{R(n)}-1)}\right)^{s}.
\]We obtain $S\geq N$.

Now we prove the second inequality in \eqref{T1:ineq}. By Lemma \ref{L1}, we have
\[
R(n)\leq c(R)n^{d}
\] for any $n\in \mathbb{N}$, where $c(R)>0$ is a constant depending only on $R$. Applying Lemma \ref{L6} with $a_n= a^{R(n)}-1$ ($n=1,\ldots, N$), $\alpha = 1/ (2d)$, $M=a^{c(R)N^{d}}$, we obtain
\begin{equation}\label{T8:i1}
S\leq (C(d))^{s}\bigg(N+ \sum_{p\leq c(R,a)N^{1/2}} \frac{\omega(p) (\ln p)^{s}}{p}\bigg),
\end{equation}where $c(R,a)>0$ is a constant, depending only on $R$ and $a$, and
\[
\omega(p)=\#\{1\leq n \leq N: a^{R(n)}\equiv 1 \text{ (mod $p$)}\}.
\] We have
\begin{equation}\label{T8:i2}
\sum_{p\leq c(R,a)N^{1/2}} \frac{\omega(p) (\ln p)^{s}}{p}=
\sum_{\substack{p\leq c(R,a)N^{1/2}\\p|a}}  +
\sum_{\substack{p\leq c(R,a)N^{1/2}\\(p,a)=1}} = S_{1}+ S_{2}.
\end{equation}

Let $p$ be a prime with $p|a$. Let us show that $\omega(p)=0$. Indeed, assume the converse. Then there are positive integers $n$, $b$, and $c$ with $1\leq n \leq N$ such that $(pb)^{R(n)}- 1 = cp$, but this is impossible. We obtain
\begin{equation}\label{T8:i3}
S_{1} = 0.
\end{equation}

Let $p$ be in the range of summation of $S_{2}$. By Lemma \ref{L2}, we have
\[
\omega(p)=\#\{1\leq n \leq N: R(n)\equiv 0\text{ (mod $h_{a}(p))$}\}.
\]Let
\[
U=\{n\in\{0,\ldots, h_{a}(p)-1\}: R(n)\equiv 0\text{ (mod $h_{a}(p))$}\}.
\]From \eqref{L_gcd:INEQ_2} we obtain
\[
\# U \leq c_{0} d b_{d}^{1/d} (h_{a}(p))^{1-1/d},
\]where $c_0 >0$ is an absolute constant. It is clear that
\[
\omega (p)= \sum_{j\in U}\#\{t\in \mathbb{Z}: 1\leq j+ h_{a}(p)t\leq N\}.
\] Let $j\in U$. By Lemma \ref{L8}, we have
\[
\#\{t\in \mathbb{Z}: 1\leq j+ h_{a}(p)t\leq N\} \leq \frac{N}{h_{a}(p)}+ 1.
\]We see that
\[
h_{a}(p)\leq p-1 < p \leq c(R,a) N^{1/2}\leq c(R,a) N,
\]and hence
\[
\#\{t\in \mathbb{Z}: 1\leq j+ h_{a}(p)t\leq N\} \leq (1+ c(R,a))\frac{N}{h_{a}(p)}=c_{1}(R,a)\frac{N}{h_{a}(p)}.
\]We obtain
\[
\omega(p) \leq c_{1}(R,a)\frac{N}{h_{a}(p)} \#U\leq
c_{1}(R,a)\frac{N}{h_{a}(p)} c_{0} d b_{d}^{1/d} (h_{a}(p))^{1-1/d} = c_{2}(R,a) \frac{N}{(h_{a}(p))^{1/d}},
\]where $c_{2}(R,a)>0$ is a constant, depending only on $R$ and $a$.

We obtain (see also Lemma \ref{L5})
\begin{align}
S_{2}&= \sum_{\substack{p\leq c(R,a)N^{1/2}\\(p,a)=1}}\frac{\omega(p) (\ln p)^{s}}{p} \leq c_{2}(R,a) N \sum_{\substack{p\leq c(R,a)N^{1/2}\\(p,a)=1}} \frac{(\ln p)^{s}}{p(h_{a}(p))^{1/d}} \notag\\
&\leq c_{2}(R,a) N \sum_{\substack{p:\\(p,a)=1}} \frac{(\ln p)^{s}}{p(h_{a}(p))^{1/d}}\leq c_{3}(R,a,s) N,\label{T8:i4}
\end{align}where $c_{3}(R,a,s)>0$ is a constant, depending only on $R$, $a$, and $s$.

From \eqref{T8:i1}, \eqref{T8:i2}, \eqref{T8:i3}, and \eqref{T8:i4} we obtain
\[
S\leq (C(d))^{s} (1 + c_{3}(R,a,s))N = c(R,a,s)N,
\]where $c(R,a,s)>0$ is a constant, depending only on $R$, $a$, and $s$. Theorem \ref{T1} is proved.
\end{proof}

\begin{proof}[Proof of Theorem \ref{T2}.]We put $a_{n}=a^{R(n)},$ $n=1, 2, \ldots .$ We are going to apply Lemma \ref{L7} with $A=\{a_n: n=1, 2,\ldots\}$. We keep the notation of Lemma \ref{L7}. We assume that $x\geq x_{0}(R,a)$, where $x_{0}(R,a)>0$ is a constant depending only on $R$ and $a$; we will choose the constant $x_{0}(R,a)$ later, and it will be large enough.

For any positive integer $n$, we have
\[
\textup{ord}_{A}(n)=\#\{j\in \mathbb{N}: a^{R(j)}=n\}=\#\{j\in \mathbb{N}: R(j)=\log_{a}n\}\leq d.
\]It is clear that
\[
\textup{ord}_{A}(a^{R(1)})\geq 1.
\]We may assume that $x_{0}(R,a)\geq a^{R(1)}$, and hence, if $x\geq x_{0}(R,a)$, then
\[
\rho_{A}(x)=\max_{n\leq x}\textup{ord}_{A}(n)\geq \textup{ord}_{A}(a^{R(1)})\geq 1.
\]We obtain
\begin{equation}\label{T2:rho}
1 \leq \rho_{A}(x) \leq d
\end{equation}for any real number $x \geq x_{0}(R,a)$.

By Lemma \ref{L1}, there are positive constants $c_{1}(R)$ and $c_{2}(R)$, depending only on $R$, such that
\begin{equation}\label{T2:R_est}
c_{1}(R)n^{d} \leq R(n) \leq c_{2}(R) n^{d}
\end{equation} for any positive integer $n$. We recall that
\[
N_{A}(x)=\#\{n\in \mathbb{N}: a^{R(n)}\leq x\}.
\] We see that there are positive constants $c_{3}(R,a)$ and $c_{4}(R,a)$ such that
\begin{equation}\label{T2:N_A_est}
c_{3}(R,a) (\ln x)^{1/d} \leq N_{A}(x)\leq c_{4}(R,a) (\ln x)^{1/d}
\end{equation} for any real number $x \geq x_{0}(R,a)$, if $x_{0}(R,a)$ is large enough. Hence, there is a positive constant $\gamma_1 = \gamma_1 (R,a)$, depending only on $R$ and $a$, such that
\[
N_{A}(x)>0
\]and
\[
N_{A}\bigg(\frac{x}{2}\bigg) \geq \gamma_{1} N_{A}(x)
\]for any real number $x \geq x_{0}(R,a)$. Thus, \eqref{L:R2_i1} and \eqref{L:R2_i2} hold.

Now we check \eqref{L:R2_i3}. We take $\alpha = 1/(2d)$. Fix a positive integer $k$ such that $a^{R(k)}< x$, and put
\[
S=\sum_{p\leq (\ln x)^{1/(2d)}} \frac{\lambda (k,p)\ln p}{p},
\]where
\[
\lambda(k,p):= \#\{n\in \mathbb{N}: a^{R(k)}< a^{R(n)}\leq x\text{ and } a^{R(n)}\equiv a^{R(k)}
\text{ (mod $p$)}\}.
\]We have
\begin{equation}\label{T2:S_rep}
S=\sum_{\substack{p\leq (\ln x)^{1/(2d)}\\ p|a}} \frac{\lambda (k,p)\ln p}{p}+
\sum_{\substack{p\leq (\ln x)^{1/(2d)}\\ (p,a)=1}} \frac{\lambda (k,p)\ln p}{p}=S_{1}+S_{2}.
\end{equation}

Since $\lambda (k,p)\leq N_{A}(x)$, we obtain
\begin{equation}\label{T2:S1_est}
S_{1} \leq N_{A}(x)\sum_{p|a} \frac{\ln p}{p}= c(a)N_{A}(x).
\end{equation}

Let $p$ be in the range of summation of $S_{2}$. Taking into account that $(a, p)=1$ and applying Lemma \ref{L2}, we obtain
\begin{align*}
\lambda (k,p)&=\#\left\{n\in \mathbb{N}: R(k)<R(n)\leq \frac{\ln x}{\ln a}\text{ and }a^{R(n)-R(k)}\equiv 1\text{ (mod $p$)}\right\}\\
&=\#\left\{n\in \mathbb{N}: R(k)<R(n)\leq \frac{\ln x}{\ln a}\text{ and }R(n)-R(k)\equiv 0\text{ (mod $h_{a}(p)$)}\right\}.
\end{align*}By \eqref{T2:R_est}, we have
\begin{align*}
\lambda (k,p)&\leq \#\left\{n\in \mathbb{N}: c_{1}(R)n^{d}\leq \frac{\ln x}{\ln a}\text{ and }
R(n)-R(k)\equiv 0\text{ (mod $h_{a}(p)$)}\right\}\\
&=\#\bigg\{1\leq n \leq \bigg[\left(\frac{\ln x}{c_{1}(R)\ln a}\right)^{1/d}\bigg]:
R(n)-R(k)\equiv 0\text{ (mod $h_{a}(p)$)}\bigg\}.
\end{align*}

Let
\[
U=\{n\in \{0,\ldots, h_{a}(p)-1\}: R(n)-R(k)\equiv 0\text{ (mod $h_{a}(p)$)}\}.
\] By \eqref{L_gcd:INEQ_2}, we have
\begin{equation}\label{T2:U_est}
\#U\leq c_{0}d b_{d}^{1/d} (h_{a}(p))^{1-1/d},
\end{equation}where $c_{0}>0$ is an absolute constant.

 We have
\[
h_{a}(p)\leq p-1 \leq (\ln x)^{1/(2d)}\leq \left(\frac{\ln x}{c_{1}(R)\ln a}\right)^{1/d} -1\leq
\bigg[\left(\frac{\ln x}{c_{1}(R)\ln a}\right)^{1/d}\bigg],
\]if $x_{0}(R,a)$ is large enough.

 Let $j\in U$. Applying Lemma \ref{L8}, we obtain
 \begin{align*}
 \#\bigg\{t\in \mathbb{Z}: 1\leq j+h_{a}(p)t\leq \bigg[\left(\frac{\ln x}{c_{1}(R)\ln a}\right)^{1/d}\bigg]\bigg\}&\leq
  \bigg[\left(\frac{\ln x}{c_{1}(R)\ln a}\right)^{1/d}\bigg] \bigg/ h_{a}(p)+1\\
  &\leq 2 \bigg[\left(\frac{\ln x}{c_{1}(R)\ln a}\right)^{1/d}\bigg] \bigg/ h_{a}(p).
   \end{align*}

   Hence (see \eqref{T2:U_est})
\begin{align*}
\lambda (k,p) &\leq \sum_{j\in U}\#\bigg\{t\in \mathbb{Z}: 1\leq j+h_{a}(p)t\leq \bigg[\left(\frac{\ln x}{c_{1}(R)\ln a}\right)^{1/d}\bigg]\bigg\}\\
&\leq
\bigg(2 \bigg[\left(\frac{\ln x}{c_{1}(R)\ln a}\right)^{1/d}\bigg] \bigg/ h_{a}(p)\bigg)\#U\leq c_{5}(R,a) \frac{(\ln x)^{1/d}}{(h_{a}(p))^{1/d}},
\end{align*}where $c_{5}(R,a)>0$ is a constant, depending only on $R$ and $a$.

Applying Lemma \ref{L5} and \eqref{T2:N_A_est}, we obtain
\begin{align}
S_{2}&=\sum_{\substack{p\leq (\ln x)^{1/(2d)}\\ (p,a)=1}} \frac{\lambda (k,p)\ln p}{p}\leq
c_{5}(R,a)(\ln x)^{1/d} \sum_{\substack{p\leq (\ln x)^{1/(2d)}\\ (p,a)=1}} \frac{\ln p}{p(h_{a}(p))^{1/d}}\notag\\
&\leq c_{5}(R,a)(\ln x)^{1/d} \sum_{\substack{p:\\ (p,a)=1}} \frac{\ln p}{p(h_{a}(p))^{1/d}}\leq
c_{6}(R,a)(\ln x)^{1/d}\leq c_{7}(R,a) N_{A}(x),\label{T2:S2_est}
\end{align}where $c_{7}(R,a)>0$ is a constant depending only on $R$ and $a$.

From \eqref{T2:S_rep}, \eqref{T2:S1_est} and \eqref{T2:S2_est}, we obtain
\[
\sum_{p\leq (\ln x)^{1/(2d)}} \frac{\lambda (k,p)\ln p}{p} \leq c_{8}(R,a) N_{A}(x)
\] for any positive integer $k$ such that $a^{R(k)}< x$. Hence,
\[
\sum_{\substack{k\in \mathbb{N}:\\ a^{R(k)}< x}}\sum_{p\leq (\ln x)^{1/(2d)}} \frac{\lambda (k,p)\ln p}{p}\leq
c_{8}(R,a) N_{A}(x) \sum_{\substack{k\in \mathbb{N}:\\ a^{R(k)}< x}} 1\leq c_{8}(R,a) (N_{A}(x))^{2},
\]where $c_{8}(R,a)$ is a positive constant depending only on $R$ and $a$. Thus, \eqref{L:R2_i3} holds with $\gamma_{2}=c_{8}(R,a)$.

By Lemma \ref{L7},
\begin{align*}
\#\{1&\leq n \leq x: \text{there are $p\in \mathbb{P}$ and $j\in \mathbb{N}$ such that $p+a^{R(j)}=n$}\}\\
&\geq c x\frac{N_{A}(x)}{N_{A}(x)+\rho_{A}(x)
\ln x},
\end{align*} where $c=c(R,a)$ is a positive constant, depending only on $R$ and $a$.

From \eqref{T2:rho} and \eqref{T2:N_A_est} we obtain
\begin{align*}
N_{A}(x) + \rho_{A}(x) \ln x&\leq c_{4}(R,a) (\ln x)^{1/d} + d\ln x\\
&\leq
(c_{4}(R,a) +d)\ln x = c_{9}(R,a)\ln x
\end{align*}and
\[
\frac{N_{A}(x)}{N_{A}(x)+\rho_{A}(x)
\ln x}\geq \frac{c_{3}(R,a) (\ln x)^{1/d}}{c_{9}(R,a)\ln x}=
\frac{c_{10}(R,a)}{(\ln x)^{1-1/d}}.
\]Hence,
\begin{align*}
\#\{1&\leq n \leq x: \text{there are $p\in \mathbb{P}$ and $j\in \mathbb{N}$ such that $p+a^{R(j)}=n$}\}\\
&\geq c_{1} \frac{x}{(\ln x)^{1-1/d}},
\end{align*} where $c_{1}=c_{1}(R,a)$ is a positive constant, depending only on $R$ and $a$.

By Chebyshev's theorem, $\pi(x) \leq c_0 x/ \ln x$, where $c_{0}>0$ is an absolute constant. Applying \eqref{T2:N_A_est}, we obtain
\begin{align*}
\#\{1&\leq n \leq x: \text{there are $p\in \mathbb{P}$ and $j\in \mathbb{N}$ such that $p+a^{R(j)}=n$}\}\\
&\leq \#\{p \leq x\}\cdot \#\{j\in \mathbb{N}:\ a^{R(j)}\leq x\} = \pi(x) N_{A}(x)\leq
c_0 \frac{x}{\ln x} c_{4}(R,a) (\ln x)^{1/d}\\
&= c_{2} \frac{x}{(\ln x)^{1-1/d}},
\end{align*}where $c_{2}=c_{2}(R,a)$ is a positive constant, depending only on $R$ and $a$. Theorem \ref{T2} is proved.

\end{proof}

\section{Acknowledgements}

The author is grateful to the anonymous referee for useful comments. The article was prepared within the framework of the Basic Research Program at HSE University, RF.

\end{document}